\begin{document}
 
\newtheorem{lemma}{Lemma}[section]
\newtheorem{prop}[lemma]{Proposition}
\newtheorem{cor}[lemma]{Corollary}
  
\newtheorem{thm}[lemma]{Theorem}
\newtheorem{Mthm}[lemma]{Main Theorem}
\newtheorem{con}{Conjecture}
\newtheorem{claim}{Claim}

\theoremstyle{definition}
  
\newtheorem{rem}[lemma]{Remark}
\newtheorem{rems}[lemma]{Remarks}
\newtheorem{defi}[lemma]{Definition}
\newtheorem{ex}[lemma]{Example}
                                                                                
\newcommand{\C}{\mathbb C}
\newcommand{\R}{\mathbb R}
\newcommand{\Q}{\mathbb Q}
\newcommand{\Z}{\mathbb Z}
\newcommand{\N}{\mathbb N}

\newcommand{\cg}{{\mathbb C}G}
\newcommand{\eog}{\ell^1G}
\newcommand{\etg}{\ell^2G}
\newcommand{\vNG}{{\mathcal N}\! G}
\newcommand{\ZNG}{Z({\mathcal N}\! G)}
\newcommand{\ag}{{\mathcal A}G}
\newcommand{\crg}{C^*_rG}
\newcommand{\cmg}{C^*G}
\newcommand{\EG}{\underline{E}G}
\newcommand{\EGamma}{\underline{E}\Gamma_g}
\newcommand{\EGam}{\underline{E}\Gamma}
\newcommand{\ctr}{\operatorname{ctr}}
\newcommand{\Tg}{{\mathcal T}_g}
\newcommand{\PSL}{\operatorname{PSL}(2,\R)}
\newcommand{\Hom}{\operatorname{Homeo}}
\newcommand{\Dif}{\operatorname{Diffeo}}
\newcommand{\Hoe}{\operatorname{Hoequ}}
\newcommand{\Out}{\operatorname{Out}}
\newcommand{\Rep}{\operatorname{Rep}}
\newcommand{\Aut}{\operatorname{Aut}}
\newcommand{\In}{\operatorname{int}}
\newcommand{\CI}{\operatorname{Coind_H^G}}
\newcommand{\CIo}{\operatorname{Coind_{G_0}^G}}
\newcommand{\TR}{\operatorname{Tran}}
%%%%%%%%%%%%%%%%%%%%%%%%%%%%%%%%%%%%%%%%%%%%%%%%%
\title[On transfer in bounded cohomology]
{On transfer in bounded cohomology}
%%%%%%%%%%%%%%%%%%%%%%%%%%%%%%%%%%%%%%%%%%%%%%%                                          
\author{Indira Chatterji}\thanks{Partially supported by NSF grant No. 0644613 and ANR grant JC-318197}
\author{and Guido Mislin}
%%%%%%%%%%%%%%%%%%%%%%%%%%%%%%%%%%%%%%%%%%%%%%%      
\address{Mapmo Universit\'e D'Orl\'eans, Orl\'eans}
\email{indira.chatterji@univ-orleans.fr}
\address{Mathematics Department, ETH Z\"urich}
\email{mislin@math.ethz.ch}
\curraddr{Mathematics Department, Ohio-State University}
\email{mislin@math.ohio-state.edu}

\begin{abstract}%\footnotemark
We define a transfer map in the setting of
bounded cohomology with certain metric $G$-module
coefficients. As an application, we extend a
theorem on the comparison map from Borel-bounded
to Borel cohomology (cf. \cite{CMPS}),
to cover the case of Lie groups with finitely many
connected components.

\end{abstract}

%\footnotetext{Version of April 6, 2009}}

\maketitle

%%%%%%%%%%%%%%%%%%%%%%%%%%%%%%%%%%%%%%%%%%%%%%%%%%%
\section{Introduction}
%%%%%%%%%%%%%%%%%%%%%%%%%%%%%%%%%%%%%%%%%%%%%%%%%%%
The groups $G$
we are working with are always assumed to be separable,
locally compact, 
with a topology given by a complete metric. Borel cohomology
groups $H^*_B(G,A)$ for $G$ with coefficients in a
polish $G$-module $A$ where introduced by Moore
\cite{Moore}. In case $A$ is equipped with a 
metric, one can define the Borel-bounded cohomology groups
$H^*_{Bb}(G,A)$ and study the comparison map from Borel-bounded
to Borel cohomology. The case of a connected Lie group $G$
and $A=\Z$ with trivial $G$-action was dealt with in \cite{CMPS}.
The purpose of this note is to define a transfer map in the
$H^*_{B}$ and $H^*_{Bb}$ setting, with respect to a closed subgroup
$H<G$ of finite index. We will make use
of our transfer map to extend some results proved
in \cite{CMPS} for connected Lie groups, to cover the case
of groups with finitely many connected components ({\sl{virtually
connected}} groups).

We thank Ian Leary for helpful remarks.

\section{The Borel (bounded) setup}\label{basic}

Through this note, $G$ will denote a separable, locally compact topological group,
with a topology given by a complete metric.

\begin{defi}
We denote by $P(G)$ the category of $G$-modules
which are separable, completely metrizable with an isometric
$G$-action ({\sl Polish} $G$-modules); morphisms are continuous $G$-maps. 
\end{defi}
When dealing with bounded cohomology, we need to fix
a metric on our coefficient modules. Let $M\in P(G)$. Let $m_1$ and
$m_2$ be two complete metrics on $M$ (compatible
with the given topology) with respect to which the $G$-action
is isometric. We say that $m_1$ and $m_2$ are {\sl bounded equivalent}
($b$-equivalent for short), if they define the same bounded
subsets in $M$. 

\begin{defi} We denote by $mP(G)$ the category of $G$-modules
$M$ in $P(G)$, equipped with a preferred $b$-equivalence class $[m]$
of metrics. Morphisms in $mP(G)$ are continuous $G$-maps which
map bounded subsets to bounded subsets.
\end{defi}

If $H<G$ is a closed normal subgroup of finite index
and $M$ is a finitely generated $\Z G$-module with trivial
$H$-action, then the word metric $m_S$ on the underlying
abelian group of $M$, with respect to a finite $G$-stable
generating set $S<M$ defines a $b$-equivalence class
$[m_S]$ of complete metrics on $M$, independent of
the particular $S$ chosen. In this way we 
obtain a fully faithful functor
$${\operatorname{fgMod}(G/H)}\to mP(G)\,,$$
with ${\operatorname{fgMod}(G/H)}$ denoting the
category of finitely generated $G/H$-modules. 

There is an obvious forgetful functor
$mP(G)\to P(G)$. In this way, we may view
an $M\in {\operatorname{fgMod}(G/H)}$ as an object
in $mP(G)$, respectively $P(G)$.

For $M\in P(G)$, a map $f: G^{n+1}\to M$ is called {\sl Borel}, if it is Borel measurable with respect to the $\sigma$-algebras associated to the topological spaces $G^{n+1}$ and $M$ respectively; in case $M$ is in $mP(G)$,
$f$ is called {\sl bounded}, if its range is a bounded subset of $M$; $f$ is called {\sl $G$-equivariant}, if $f(xg_0,\cdots,xg_n)
= xf(g_0,\cdots,g_n)$. We write
$$C^n_{B}(G,M)={\rm{map}}_{B}^G(G^{n+1},M)\hbox{ and }C^n_{Bb}(G,M)={\rm{map}}_{Bb}^G(G^{n+1},M)$$ 
for the abelian group of $G$-equivariant Borel (respectively Borel bounded) maps
$G^{n+1}\to M$. With the usual differentials, this defines
cochain complexes $C^*_{B}(G,M)$ and $C^*_{Bb}(G,M)$ whose respective cohomologies,
$$H^*_{B}(G,M)\hbox{ and }H^*_{Bb}(G,M)$$
is the {\sl Borel} (respectively {\sl Borel bounded}) cohomology of $G$ with coefficients
in $M$ (for Borel cohomology with
coefficients in $P(G)$, see Moore \cite{Moore}; Borel bounded
cohomology with Banach $G$-module coefficients has been dealt with
in \cite{Monod}).

%When the settings work for both Borel and Borel bounded theories, we shall use the subscripts $B(b)$.

\begin{rem}
Let $G$ denote a virtually connected Lie group with
connected component $G_0$.
For $M\in P(G)$ with underlying topology of $M$ discrete, the
$G$-action on $M$ factors through $G/G_0$ so that $M$ is a
$G/G_0$-module.
Note also that, because $G$ is virtually connected $\pi_1(BG)=\pi_0(G)=G/G_0$ is a finite group. According to
Wigner \cite[Theorem 4]{Wigner}, see also Moore \cite[Section 7, (1)]{Moore}, one has for $M$ 
discrete and countable
a natural isomorphism
$$H^*_B(G,M)\cong H^*(BG,M)$$
where $H^*(BG,M)$ denotes the singular cohomology of $BG$
with local coefficients in the $\pi_1(BG)$-module $M$. Similarly, for
$V$ a continuous finite-dimensional $\R$-representation of $G$, one has
a natural isomorphism
$$H^*_B(G,V)\cong H^*_c(G,V)\,,$$
with $H^*_c(G,V)$ denoting the continuous cohomology of $G$
\cite[Section 7, (2)]{Moore}.
\end{rem}

Let $H<G$ be a closed subgroup of finite index and
let $M\in P(H)$.
We write $\pi:G\to G/H$ for the projection and
we fix a section $\sigma: G/H\to G$ satisfying
$\sigma(1\cdot H)=1\in G$. There is a continuous retraction 
$\rho: G\to H$
given by $\rho(g)=g\cdot \sigma(\pi(g^{-1}))$. 
We define the coinduced module $\CI M\in P(G)$ as follows: 
the underlying abelian group consist
of all continuous functions $f:G\to M$ such that $f(gh)=h^{-1}f(g)$
for all $(g,h)\in G\times H$. This defines a coinduction functor
$$\CI: P(H)\to P(G)\,.$$
If $M\in mP(G)$, that is, $M$ is equipped with $d_M$ 
a compatible preferred metric, we observe that
the range of $d_M(f(?),0):G\to \R$ is compact, because
$d_M(f(gh),0)=d_M(f(g),0)$ for $h\in H$. We define a metric 
on $\CI{M}$, using the sup-metric $||f||=
\operatorname{sup}_G(d_M(f(g),0))$. The
isometric $G$-module structure on $\CI{M}$ is 
given by $(x\cdot f)(g)=f(x^{-1}g)$. Hence the functor $\CI$ defines a coinduction functor
$$\CI:mP(H)\to mP(G)\,.$$
It also follows that one has a natural isomorphism
$$(\CI{M})^G\stackrel{\cong}\to M^H\,, f\mapsto f(1)\,.$$
The following partial {\sl Eckmann-Shapiro}
Lemma holds in this situation.
\begin{lemma}\label{CI} Let $G$ be a separable, locally compact topological group,
with a topology given by a complete metric and let $H<G$ a closed subgroup of finite index. Let $M\in mP(H)$.
There is a natural diagram
$$\xymatrix{H^*_{Bb}(G,\CI{M})\ar[r]^{\hspace{5mm}\Theta^*_{b}}
\ar[d]& H^*_{Bb}(H,M)\ar[d]\\
H^*_{B}(G,\CI{M})\ar[r]^{\hspace{5mm}\Theta^*}&H^*_B(H,{M})}$$
and both maps $\Theta^*$ and $\Theta^*_b$ are onto.
\end{lemma}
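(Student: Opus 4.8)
The plan is to realise both maps already at the level of cochain complexes and, in addition, to produce an explicit cochain section of $\Theta^*$; surjectivity of $\Theta^*$ and $\Theta^*_b$ then follows formally, and the naturality of the square is automatic because all the maps involved, as well as the splitting, are defined on $C^*_{B}$ and $C^*_{Bb}$ and are manifestly compatible with the forgetful comparison maps $C^*_{Bb}\to C^*_{B}$.

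\emph{The map $\Theta^*$.} For a $G$-equivariant Borel (respectively Borel bounded) cochain $f\colon G^{n+1}\to\CI M$ I set $(\Theta^n f)(h_0,\dots,h_n)=f(h_0,\dots,h_n)(1)\in M$. Evaluation at $1\in G$ is a $1$-Lipschitz $H$-map $\CI M\to M$ for the sup-metric on $\CI M$ (the defining relation $F(gh)=h^{-1}F(g)$ gives $F(h^{-1})=h\cdot F(1)$, which yields the $H$-equivariance), and since $H$ is closed in $G$ the restriction $f|_{H^{n+1}}$ is Borel; hence $\Theta^n f$ is an $H$-equivariant Borel cochain, bounded whenever $f$ is. As the bar differential does not involve the coefficient action, $\Theta^*$ is a cochain map on both complexes, and it commutes with the comparison maps, giving the asserted square on cohomology.

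\emph{A cochain section $\Psi^*$.} Using the continuous retraction $\rho\colon G\to H$, $\rho(g)=g\,\sigma(\pi(g^{-1}))$, I define, for an $H$-equivariant Borel cochain $\phi\colon H^{n+1}\to M$,
\[
(\Psi^n\phi)(g_0,\dots,g_n)(x)=\phi\bigl(\rho(x^{-1}g_0),\dots,\rho(x^{-1}g_n)\bigr).
\]
From $\pi(g^{-1}h)=\pi(g^{-1})$ one gets $\rho(h^{-1}g)=h^{-1}\rho(g)$ for $h\in H$, and together with the $H$-equivariance of $\phi$ this shows that for fixed $(g_0,\dots,g_n)$ the function $x\mapsto(\Psi^n\phi)(g_0,\dots,g_n)(x)$ satisfies the coinduction relation, while a direct substitution gives $G$-equivariance of $\Psi^n\phi$. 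The delicate point—and this is where finiteness of the index enters—is that $\Psi^n\phi$ must be a genuine morphism of the right kind: since $G/H$ is finite and discrete the cosets $\sigma(\bar g)H$ are open, and evaluation at the representatives $\sigma(\bar g)$ identifies $\CI M$ \emph{isometrically} with the finite product $\bigoplus_{\bar g\in G/H}M$ (max-metric) as an isometric $G$-module; under this identification $\Psi^n\phi$ is a finite tuple of compositions of the Borel map $\phi$ with continuous maps $G^{n+1}\to H^{n+1}$, hence Borel, and bounded whenever $\phi$ is, and the same identification makes $\Psi^n\phi(g_0,\dots,g_n)$ automatically continuous. Again $\Psi^*$ is a cochain map on both complexes and commutes with comparison maps.

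\emph{Conclusion.} Because $\sigma(1\cdot H)=1$ we have $\rho|_H=\operatorname{id}_H$, so
\[
(\Theta^n\Psi^n\phi)(h_0,\dots,h_n)=(\Psi^n\phi)(h_0,\dots,h_n)(1)=\phi(\rho(h_0),\dots,\rho(h_n))=\phi(h_0,\dots,h_n),
\]
i.e. $\Theta^n\Psi^n=\operatorname{id}$ on cochains, in both the Borel and the Borel bounded complex. Passing to cohomology, $\Theta^*\Psi^*=\operatorname{id}$ and $\Theta^*_b\Psi^*_b=\operatorname{id}$, so $\Theta^*$ and $\Theta^*_b$ are split surjections fitting into the natural square. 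I expect the main obstacle to be the verification in the previous paragraph that the formula for $\Psi^n\phi$ actually lands in the continuous-function module $\CI M$ with its sup-metric and preserves the Borel and boundedness conditions; this rests on the isometric identification $\CI M\cong\bigoplus_{G/H}M$, which needs $[G:H]<\infty$ (for infinite index the coinduced module is genuinely infinite and the fibrewise composition of a Borel cochain with $\rho$ need not be continuous, so the section $\Psi^*$, and with it the surjectivity, would fail).
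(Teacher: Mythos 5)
Your argument is essentially identical to the paper's proof: your $\Psi^*$ is exactly the paper's splitting $\Lambda$, defined by the same formula $\Lambda(f)(g_0,\dots,g_n)(g)=f(\rho(g^{-1}g_0),\dots,\rho(g^{-1}g_n))$, and the conclusion $\Theta\circ\Lambda=\mathrm{Id}$ giving split surjectivity is the same. You supply more detail than the paper on why $\Lambda$ lands in $\CI M$ and preserves Borelness and boundedness, but the approach is the same.
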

\begin{proof} The map of cochain complexes:
$$\Theta: C^*_{B}(G,\CI{M})\to C^*_{B}(H,M)\,,$$
given on $n$-cochains by $\Theta(f)(h_0,\cdots,h_n)=f(h_0,\cdots,h_n)(1)$, restricts to a map $\Theta_b$ of bounded cochains. Moreover, the map
$$\Lambda: C^*_{B}(H,M)\to C^*_{B}(G,\CI{M})$$
given by $\Lambda(f)(g_0,\cdots,g_n)(g)=f(\rho(g^{-1}g_0),\cdots,\rho(g^{-1}g_n))$ on $n$-cochains, satisfies
\begin{eqnarray*}(\Theta\circ\Lambda)(f)(h_0,\cdots,h_n)&=&
\Lambda(f)(h_o,\cdots,h_n)(1)\\
&=&f(\rho(h_0),\cdots,\rho(h_n))=
f(h_0,\cdots,h_n)\,,\end{eqnarray*}
so that $\Theta\circ\Lambda=Id\,$ and therefore the induced map $\Theta^*$ and $\Theta^*_b$ are split surjections. 
\end{proof}
Recall that the {\sl radical}
$\sqrt{G}$ of a (not necessarily connected) Lie group $G$
is its maximal, connected, normal, solvable subgroup. A Lie
groups is called {\sl linear}, if it admits a faithful
representation $G\to GL(n,\R)$ for some $n$.
\begin{cor}\label{3implies1}
Let $G$ be a virtually connected Lie group such that
for every finitely generated $G/G_0$-module $M$ the forgetful
map $H^*_{Bb}(G,M)\to H^*_B(G,M)$ is surjective. Then the radical $\sqrt{G}$
of $G$ is linear.
\end{cor}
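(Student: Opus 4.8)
The plan is to reduce the statement to the case of the connected Lie group $G_0$, which is covered by \cite{CMPS}, by means of the partial Eckmann--Shapiro Lemma~\ref{CI}. Since $G$ is virtually connected, $G_0$ is a closed normal subgroup of finite index, so Lemma~\ref{CI} is available with $H=G_0$. The idea is that the hypothesis on $G$ --- surjectivity of the forgetful map for \emph{all} finitely generated $G/G_0$-modules --- forces the analogous surjectivity for $G_0$ with trivial finitely generated coefficients, and this is exactly the input needed for the connected case.

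Concretely, I would proceed as follows. Let $N$ be a finitely generated abelian group, regarded as a trivial $G_0$-module and hence as an object of $mP(G_0)$ via its word metric (for which the bounded sets are precisely the finite sets). First one checks that the coinduced module $\CIo{N}$ is a finitely generated $G/G_0$-module: because $N$ carries the trivial $G_0$-action, every $f\in\CIo{N}$ is constant on the right cosets $gG_0$, and normality of $G_0$ then makes the $G$-action $(x\cdot f)(g)=f(x^{-1}g)$ factor through the finite group $G/G_0$, with $\CIo{N}\cong N^{[G:G_0]}$ as abelian groups; moreover the sup-metric on $\CIo{N}$ is $b$-equivalent to the word metric on $N^{[G:G_0]}$. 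Hence the hypothesis of the corollary applies to $\CIo{N}$, and $H^*_{Bb}(G,\CIo{N})\to H^*_B(G,\CIo{N})$ is onto. Next, feed this into the commutative square of Lemma~\ref{CI} (with $H=G_0$, $M=N$): given $\alpha\in H^*_B(G_0,N)$, lift it along the surjection $\Theta^*$ to $\beta\in H^*_B(G,\CIo{N})$, lift $\beta$ along the forgetful map to $\gamma\in H^*_{Bb}(G,\CIo{N})$, and read off from commutativity that the forgetful image of $\Theta^*_b(\gamma)\in H^*_{Bb}(G_0,N)$ equals $\Theta^*(\beta)=\alpha$. Thus the forgetful map $H^*_{Bb}(G_0,N)\to H^*_B(G_0,N)$ is surjective for every finitely generated abelian group $N$ with trivial $G_0$-action.

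Finally, $G_0$ is a connected Lie group satisfying precisely the hypothesis of the connected version of the statement, so by \cite{CMPS} its radical $\sqrt{G_0}$ is linear; and $\sqrt{G}=\sqrt{G_0}$, since the radical of $G_0$ is characteristic in $G_0$ and $G_0\trianglelefteq G$ (so $\sqrt{G_0}\trianglelefteq G$ is connected and solvable, whence $\sqrt{G_0}\subseteq\sqrt{G}$), while $\sqrt{G}$ is connected, hence contained in $G_0$ and normal solvable there (whence $\sqrt{G}\subseteq\sqrt{G_0}$). Therefore $\sqrt{G}$ is linear. I expect the only delicate bookkeeping to be the verification that $\CIo{N}$ really lies in the category of finitely generated $G/G_0$-modules carrying the correct $b$-equivalence class of metrics, so that the hypothesis of the corollary applies to it verbatim; everything else is a formal diagram chase, the substantive content --- the Lie-theoretic construction of unbounded classes when $\sqrt{G}$ fails to be linear --- being already packaged in \cite{CMPS}.
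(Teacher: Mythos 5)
Your proposal is correct and follows essentially the same route as the paper: apply the hypothesis to the coinduced module $\CIo{N}$ (the paper only needs $N=\Z$, since the cited result from \cite{CMPS} concerns trivial $\Z$ coefficients), use the surjectivity of $\Theta^*$ and $\Theta^*_b$ from Lemma~\ref{CI} to transfer surjectivity of the forgetful map down to $G_0$, and invoke the connected case together with $\sqrt{G}=\sqrt{G_0}$. Your extra verifications (that $\CIo{N}$ is a finitely generated $G/G_0$-module with the right metric class, and that $\sqrt{G}=\sqrt{G_0}$) are details the paper leaves implicit, and they check out.
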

\begin{proof}
Since $\CIo{\Z}$ is
finitely generated as an abelian group, with the
connected group $G_0$ acting trivially, it is a
finitely generated $G/G_0$-module. We conclude from
our assumption that 
$$H^*_{Bb}(G,\CIo{\Z})\to H^*_B(G,\CIo{\Z})$$
is surjective. From Lemma \ref{CI} with $H=G_0$ and $M=\Z$ we conclude that
$$H^*_{Bb}(G_0,\Z)\to H^*_B(G_0,\Z)$$
is surjective too. Because $G_0$ is connected, we can apply
Theorem 1.1 of \cite{CMPS} to conclude that $\sqrt{G_0}$ is linear,
which completes the proof of the corollary, since $\sqrt{G_0}=\sqrt{G}$.
\end{proof}

\section{Construction of the transfer map}\label{transfer}
Let $G$ be a separable, locally compact topological group,
with a topology given by a complete metric and let $H<G$ a closed subgroup of finite index.
Let $M\in P(G)$. The map
$$\kappa: \CI{M}\to M,\quad f\mapsto \sum_{gH\in G/H}g\cdot f(g)$$
is well-defined, because for $h\in H$, $h^{-1}f(g)=f(gh)$ so that
$gh\cdot f(gh)=g\cdot f(g)$. Moreover, $\kappa$ is a $G$-equivariant map since for $x\in G$,
\begin{eqnarray*}\kappa(x\cdot f)&=& \sum_{G/H}g\cdot (xf)(g)=\sum_{G/H}g\cdot f(x^{-1}g)\\
&=&
x(\sum_{G/H}x^{-1}g\cdot f(x^{-1}g))=x\cdot \kappa(f).\end{eqnarray*} 

We further note that, when working with a fixed
compatible metric on $M$, then $||\kappa(f)||\le [G:H]||f||$. Hence $\kappa$ maps bounded sets of $\CI{M}$ to bounded sets of $M$ and induces
$$\kappa^*:H^*_{Bb}(G,\CI{M})\to H^*_{Bb}(G,M)\,.$$
\begin{defi}
We define the {\sl transfer map} in Borel cohomology for $M\in P(G)$ by
$$\TR: = \kappa^*\circ \Theta^*: H^*_{B}(H,M)\to H^*_{B}(G,\CI{M})\to H^*_{B}(G,M)\,.$$
If $M\in mP(G)$, one similarly defines the {\sl transfer map} in Borel bounded cohomology by
$$\TR: = \kappa^*\circ \Theta^*_b: H^*_{Bb}(H,M)\to H^*_{Bb}(G,\CI{M})\to H^*_{Bb}(G,M)\,.$$
\end{defi}
\begin{thm}Let $G$ be a separable, locally compact topological group,
with a topology given by a complete metric and let $H<G$ a closed subgroup of finite index. Let $M\in P(G)$ and denote by
$\rm{Res}: H^*_B(G,M)\to H^*_B(H,M)$ the restriction map. Then the composite map
$$\TR\circ\rm{Res}: H^*_B(G,M)\to H^*_B(G,M)$$
is the multiplication by the index $[G:H]$. Moreover, if $M\in mP(G)$,
there is a natural commutative diagram
$$\xymatrix{
H^*_{Bb}(G,M)\ar[r]^{\rm{Res}}\ar[d]&
H^*_{Bb}(G_0,M)\ar[r]^{\TR}\ar[d]&
 H^*_{Bb}(G,M)\ar[d]\\
  H^*_B(G,M))\ar[r]^{\rm{Res}}& H^*_B(G_0,M)\ar[r]^{\TR}&
 H^*_B(G,M)\,.
}$$
\end{thm}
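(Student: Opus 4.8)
The plan is to carry out the whole computation on the equivariant Borel cochain complexes and to exhibit an explicit $G$-equivariant chain homotopy; the crux is that this homotopy must be assembled from non-equivariant simplicial pieces by summing over cosets. First I would record that $\mathrm{Res}$ is induced by the restriction of cochains $\mathrm{res}\colon C^*_B(G,M)\to C^*_B(H,M)$ along $H^{n+1}\hookrightarrow G^{n+1}$, and that $\TR$ is induced by the cochain map $u\mapsto\kappa\circ\Lambda(u)$, where $\Lambda\colon C^*_B(H,M)\to C^*_B(G,\CI M)$ is the map from the proof of Lemma~\ref{CI} (a cochain-level section of $\Theta$, inducing the ``$\Theta^*$'' appearing in the definition of $\TR$) and $\kappa$ is as in Section~\ref{transfer}. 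Hence $\TR\circ\mathrm{Res}$ is induced by the endomorphism $T=\kappa\circ\Lambda\circ\mathrm{res}$ of $C^*_B(G,M)$, and unwinding the formulas, for a $G$-equivariant Borel cochain $u$,
$$T(u)(g_0,\dots,g_n)=\sum_{gH\in G/H}g\cdot u\bigl(\rho(g^{-1}g_0),\dots,\rho(g^{-1}g_n)\bigr)=\sum_{c\in G/H}u\bigl(\alpha_c(g_0),\dots,\alpha_c(g_n)\bigr),$$
where $G/H$ is viewed as the finite left $G$-set of cosets and $\alpha_c\colon G\to G$ is the continuous map $\alpha_c(g):=g\,\sigma(g^{-1}\!\cdot c)$; the second equality uses $\rho(g^{-1}g_i)=g^{-1}\alpha_{gH}(g_i)$ together with the $G$-equivariance of $u$.

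Next I would note that for any set map $\beta\colon G\to G$, applying $\beta$ in each coordinate gives a semi-simplicial self-map of $G^{\bullet+1}$ — it commutes with the face operators, which merely delete coordinates — so pullback $\beta^{\sharp}$ is a chain endomorphism of the non-equivariant complex $\mathrm{map}_B(G^{\bullet+1},M)$; and for any two such maps $\beta,\beta'$ the endomorphisms $\beta^{\sharp}$ and $\beta'^{\sharp}$ are chain homotopic via the classical prism operator. Applied with $\beta=\mathrm{id}_G$ and $\beta'=\alpha_c$, this yields, for each coset $c$, the operator
$$s_c(u)(g_0,\dots,g_{n-1})=\sum_{j=0}^{n-1}(-1)^j\,u\bigl(g_0,\dots,g_j,\alpha_c(g_j),\alpha_c(g_{j+1}),\dots,\alpha_c(g_{n-1})\bigr),$$
and a direct computation (with the usual sign conventions for $\delta$) verifies $\delta s_c+s_c\delta=\alpha_c^{\sharp}-\mathrm{id}$.

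Now put $s:=\sum_{c\in G/H}s_c$, so that $\delta s+s\delta=\sum_c\alpha_c^{\sharp}-[G:H]\,\mathrm{id}=T-[G:H]\,\mathrm{id}$ on $C^*_B(G,M)$. The delicate point — and the main obstacle — is that neither $\alpha_c^{\sharp}$ nor $s_c$ individually preserves $G$-equivariance, but their sums do: this follows from the identity $\alpha_c(xg)=x\,\alpha_{x^{-1}\!\cdot c}(g)$ for $x\in G$, since substituting it and re-indexing $c\mapsto x^{-1}\!\cdot c$ (a permutation of the finite set $G/H$) shows $T(u)$ and $s(u)$ are again $G$-equivariant when $u$ is. They are also Borel (finite sums of compositions of Borel maps), and in the $mP(G)$ case $s$ carries bounded cochains to bounded cochains — each $s_c$ multiplies the sup-norm by at most $n$, hence $s$ by at most $n[G:H]$ — so $s$ restricts to $C^*_{Bb}(G,M)$. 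Therefore $T$ is $G$-equivariantly (and, in the metric case, boundedly) chain homotopic to $[G:H]\cdot\mathrm{id}$, and passing to cohomology gives $\TR\circ\mathrm{Res}=[G:H]\cdot\mathrm{id}$ on $H^*_B(G,M)$, which is the first assertion.

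Finally, for the commutative diagram I would observe that each of its four maps is induced by a cochain map defined verbatim in both the $B$ and $Bb$ settings and compatible with the forgetful inclusions $C^*_{Bb}\hookrightarrow C^*_B$: the restriction $\mathrm{res}$ evidently is, and $\TR$ is induced by $\kappa\circ\Lambda$, where $\kappa$ satisfies $\|\kappa f\|\le[G:H]\,\|f\|$ and $\Lambda$ (like $\Theta$) sends bounded cochains to bounded cochains with $\|\Lambda f\|\le\|f\|$, as recorded in Section~\ref{basic} and the proof of Lemma~\ref{CI}. Hence the square commutes already on cochains, and taking cohomology — together with the evident functoriality of all constructions in $M$ — gives the asserted natural commutative diagram.
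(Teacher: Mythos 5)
Your argument is correct, but it reaches the conclusion by a genuinely different route from the paper. The paper regards $\TR\circ\mathrm{Res}$ as a natural transformation of cohomological functors on $P(G)$ and invokes Moore's theorem \cite[Theorem 4]{Moore} that $H^*_B(G,-)$ is coeffaceable, so that the transformation is determined by its value in degree $0$, where it is visibly $m\mapsto\sum_{gH\in G/H}gm=[G:H]m$ on $M^G$; the commutativity of the diagram is then noted to be immediate from the definitions. You instead unwind $\kappa\circ\Lambda\circ\mathrm{res}$ on cochains and construct an explicit prism homotopy $s=\sum_{c\in G/H}s_c$ between it and $[G:H]\cdot\mathrm{id}$, observing that while the individual operators $\alpha_c^{\sharp}$ and $s_c$ are not $G$-equivariant, their sums over $G/H$ are, via $\alpha_c(xg)=x\,\alpha_{x^{-1}\cdot c}(g)$ and reindexing. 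I checked the identity $\delta s_c+s_c\delta=\alpha_c^{\sharp}-\mathrm{id}$ and the equivariance argument; both are right, and your correct reading of the ``$\Theta^*$'' in the definition of $\TR$ as the map induced by the section $\Lambda$ from the proof of Lemma \ref{CI} is the intended one. Your approach buys something the paper's does not: since $s$ preserves bounded cochains, you obtain that $\TR\circ\mathrm{Res}$ is multiplication by $[G:H]$ on $H^*_{Bb}(G,M)$ as well, a statement that cannot be extracted from the coeffaceability argument (Moore's theorem is not available in the bounded setting); it is also self-contained and elementary, at the cost of being longer. The paper's approach is shorter but leans on a nontrivial external input and, strictly speaking, also requires checking that $\TR\circ\mathrm{Res}$ commutes with connecting homomorphisms. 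One small caveat in your write-up: the boundedness of $s_c(u)$ uses that a finite sum of bounded subsets of $M$ is bounded, which is not automatic for an arbitrary invariant metric --- but it is exactly the assumption already implicit in the paper for $\delta$ to preserve $C^*_{Bb}$, so it introduces no new gap.
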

\begin{proof} Consider $H^*_B(G,?)$ as a cohomological functor on $P(G)$. In 
\cite[Theorem 4]{Moore} Moore proved 
that this functor is
coeffaceable, that is, for $M\in P(G)$ there exist a closed
embedding $M\to J(M)$ with $J(M)\in P(G)$ satisfying
$H^*_B(G,J(M))=0$ for $*>0$. It follows that any
natural transformation $T^*: H^*_B(G,?)\to H^*_B(G,?)$
of functors on $P(G)$
is determined by $T^0: H^0_B(G,?)\to H^0_B(G,?)$. Applying that remark to the natural transformation
$T=\TR\circ\rm{Res}$ one verifies for $M\in P(G)$ that in degree $0$ 
the map $T^0:H^0(G,M)\to H^0(G,M)$ corresponds
to the map $M^G\to M^G$ given by
$m\mapsto \sum_{gH\in G/H}gm=[G:H]m$. As a result, the composite map $\TR\circ\rm{Res}$ is, as claimed, the multiplication
by $[G:H]$, in all degrees. The commutativity of the diagram
for the forgetful map is
immediate from the definition of the transfer.
\end{proof}
\section{Applications}
If $A$ is a finitely generated $\Z G$-module, with kernel
of the $G$-action on $A$ a normal 
subgroup of finite index in $H<G$, we can (as described earlier)
consider $A$ as an
object in $mP(G)$, by choosing a word metric on $A$ with
respect to a $G$-stable finite generating set. Forming
$A\otimes\R$, we can choose a $G$-invariant Euclidean metric
on that finite dimensional $\R$-vector space, yielding
an object $A\otimes\R$ in $mP(G)$ such that $A\to A\otimes\R$
is a morphism in $mP(G)$ (it maps bounded subsets to bounded subsets). 
We can also form $(A\otimes\R)/\operatorname{image}(A)$, which is topologically a finite
dimensional torus, inheriting from $A\otimes\R$ a natural $G$-invariant metric,
yielding
an exact sequence 
$$A\to A\otimes\R\to(A\otimes\R)/\operatorname{image}(A)$$
in $mP(G)$. We call this the {\em standard way}
of viewing $A$, $A\otimes\R$ and $(A\otimes\R)/\operatorname{image}(A)$
as objects in $mP(G)$.
 
\begin{lemma}\label{R-reduction}Let $G$ be a separable, locally compact topological group,
with a topology given by a complete metric and let $H<G$ a closed subgroup of finite index.
Let $A$ be a finitely generated $\Z G$-module
with $H$ acting trivially on $A$; view $A$ and $A\otimes\R$
in the standard way as an object in $mP(G)$.
If the natural map $H^*_B(G,A)\to H^*_B(G,A\otimes \R)$
maps an element $x\in H^*_B(G,A)$ to an element $\overline{x}
\in H^*_B(G,A\otimes \R)$ which lies in the image of
$H^*_{Bb}(G,A\otimes\R)\to H^*_B(G,A\otimes\R)$, then $x$
lies in the image of $H^*_{Bb}(G,A)\to H^*_B(G,A)$.
\end{lemma}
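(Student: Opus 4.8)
The plan is to exploit the exact sequence $A\to A\otimes\R\to(A\otimes\R)/\operatorname{image}(A)$ from the standard way, together with the observation that the torus $T:=(A\otimes\R)/\operatorname{image}(A)$ is compact, hence \emph{bounded}: on cochains one has $C^*_{Bb}(G,T)=C^*_B(G,T)$, so the forgetful map $H^*_{Bb}(G,T)\to H^*_B(G,T)$ is an isomorphism. First I would reduce to the case that $A$ is torsion-free. Writing $\tau=\operatorname{tors}(A)$ and $B=A/\tau=\operatorname{image}(A)$, both viewed in the standard way in $mP(G)$, the sequence $0\to\tau\to A\to B\to 0$ is short exact in $mP(G)$; since $\tau$ is finite it is bounded, so $H^*_{Bb}(G,\tau)\to H^*_B(G,\tau)$ is an isomorphism. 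Comparing the long exact sequences of this short exact sequence in $H^*_{Bb}(G,-)$ and $H^*_B(G,-)$, a short diagram chase shows that an element of $H^*_B(G,A)$ lies in the image of $H^*_{Bb}(G,A)\to H^*_B(G,A)$ whenever its image in $H^*_B(G,B)$ lies in the image of $H^*_{Bb}(G,B)\to H^*_B(G,B)$; and since $H^*_B(G,A)\to H^*_B(G,A\otimes\R)=H^*_B(G,B\otimes\R)$ factors through $H^*_B(G,B)$, it suffices to prove the lemma with $B$ in place of $A$. So from now on $A$ is free abelian of finite rank.

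Next I would set up the two long exact sequences attached to the short exact sequence $0\to A\to A\otimes\R\xrightarrow{p}T\to 0$ in $mP(G)$, and their compatibility. For $H^*_B(G,-)$ this is part of Moore's theory; the relevant point is that $C^n_B(G,A\otimes\R)\to C^n_B(G,T)$ is onto, which follows by composing with a Borel section of $p$ (measurable selection). For $H^*_{Bb}(G,-)$, one passes to the inhomogeneous bar model, under which $C^n_{Bb}(G,M)$ is identified with the group of \emph{all} bounded Borel maps $G^n\to M$ (the homogeneous equivariance imposing no condition there), and one chooses a bounded Borel section $s\colon T\to A\otimes\R$ with image in a bounded fundamental domain of the lattice $\operatorname{image}(A)$ --- such an $s$ is Borel by the Lusin--Souslin theorem. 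Post-composition with $s$ shows $C^n_{Bb}(G,A\otimes\R)\to C^n_{Bb}(G,T)$ is onto, so $0\to C^*_{Bb}(G,A)\to C^*_{Bb}(G,A\otimes\R)\to C^*_{Bb}(G,T)\to 0$ is short exact; since the inclusions $C^*_{Bb}\hookrightarrow C^*_B$ make this a map of short exact sequences of cochain complexes, the resulting long exact sequence in $H^*_{Bb}(G,-)$ is compatible with the Borel one, connecting homomorphisms included.

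Finally, the diagram chase. Let $x\in H^*_B(G,A)$ with image $\overline x\in H^*_B(G,A\otimes\R)$ equal to $c(\bar y)$ for some $\bar y\in H^*_{Bb}(G,A\otimes\R)$, where $c$ denotes forgetful maps and $i_*,p_*$ the maps induced by $i\colon A\to A\otimes\R$, $p\colon A\otimes\R\to T$. The image $p_*\bar y\in H^*_{Bb}(G,T)$ maps under the isomorphism $H^*_{Bb}(G,T)\xrightarrow{\cong}H^*_B(G,T)$ to $p_*c(\bar y)=p_*\overline x=p_*i_*x=0$, hence $p_*\bar y=0$; by exactness of the Borel bounded sequence, $\bar y=i_*z$ with $z\in H^*_{Bb}(G,A)$. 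Then $i_*\big(x-c(z)\big)=\overline x-c(i_*z)=c(\bar y)-c(\bar y)=0$ in $H^*_B(G,A\otimes\R)$, so $x-c(z)=\partial_B(w)$ for some $w\in H^*_B(G,T)$. Lifting $w$ through $H^*_{Bb}(G,T)\xrightarrow{\cong}H^*_B(G,T)$ to $w'$ and using compatibility of connecting homomorphisms, $x=c\big(z+\partial_{Bb}(w')\big)$, which lies in the image of $H^*_{Bb}(G,A)\to H^*_B(G,A)$, as required.

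I expect the main obstacle to be the construction of the long exact sequence in Borel bounded cohomology, that is, the surjectivity of $C^*_{Bb}(G,A\otimes\R)\to C^*_{Bb}(G,T)$: this is precisely where the compactness of $T$ (providing a bounded Borel section) and the passage to the non-equivariant inhomogeneous model are both essential. Once that point is secured, the remainder is routine homological algebra.
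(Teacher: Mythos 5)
Your proposal is correct and follows essentially the same route as the paper: reduce to the torsion-free case via the finite torsion subgroup (whose bounded and Borel cohomologies agree), then compare the long exact sequences of $A\to A\otimes\R\to A\otimes S^1$ in $H^*_{Bb}$ and $H^*_B$, using a bounded Borel section of the projection onto the compact torus and the fact that the torus has finite diameter to make the outer vertical maps isomorphisms, and finish with the four-lemma-style chase. You merely present the torsion reduction first rather than last and spell out the diagram chase that the paper leaves implicit.
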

\begin{proof}
We first consider the case of an $A$ with underlying abelian
group torsion-free. In that case, tensoring with
$\Z\to\R\to S^1$ yields a short exact sequence 
$A\to A\otimes\R\to A\otimes S^1$ in $mP(G)$, which admits
a Borel bounded section $A\otimes S^1\to A\otimes\R$ of the
underlying spaces (cf. \cite{Warner}). Therefore,
we obtain a commutative diagram with exact rows
and vertical maps $\alpha$ and $\delta$ isomorphisms,
because $A\otimes S^1$ has finite diameter:
$$\xymatrix @R=2pc @C=1pc{
H^{n-1}_{Bb}(G,A\otimes S^1)\ar[r]\ar[d]^{\alpha}_\cong &
H^n_{Bb}(G,A)\ar[r]\ar[d]^{\beta}&
 H^n_{Bb}(G,A\otimes\R)\ar[r]\ar[d]^{\gamma} &
 H^n_{Bb}(G,A\otimes S^1)\ar[d]^{\delta}_\cong\\
  H^{n-1}_B(G,A\otimes S^1)\ar[r]& H^n_B(G,A)\ar[r]&
 H^n_B(G,A\otimes\R)\ar[r] & H^n_B(G,A\otimes S^1).
}$$
The result then
follows from a simple diagram chase.
If $A$ has torsion, denote by $TA<A$ the 
torsion subgroup (with the induced metric); it
is a $G$-submodule. We have a $\Z$-split
short exact sequence $TA\to A\to A/TA$
in $mP(G)$, which gives rise to a commutative
diagram with exact rows and isomorphisms
$\chi$ and $\mu$, because $TA$ is finite:
$$\xymatrix{
H^{n}_{Bb}(G,TA)\ar[r]\ar[d]^{\chi}_\cong &
H^n_{Bb}(G,A)\ar[r]\ar[d]^{\psi}&
 H^n_{Bb}(G,A/TA)\ar[r]\ar[d]^{\phi} &
 H^{n+1}_{Bb}(G,TA)\ar[d]^{\mu}_\cong\\
  H^{n}_B(G,TA)\ar[r]& H^n_B(G,A)\ar[r]&
 H^n_B(G,A/TA)\ar[r] & H^{n+1}_B(G,TA).
}$$
Having proved the claim of the lemma for $A/TA$ and using
that $A\otimes\R\cong (A/TA)\otimes\R$, a
simple diagram chase completes
the proof for the general case.
\end{proof}
\begin{lemma}\label{N-reduction}Let $G$ be a virtually connected, separable, locally compact topological group,
with a topology given by a complete metric and let $M$ in $mP(G)$ be finitely
generated as an abelian group and with discrete topology. Let $x\in H^*_B(G,M)$
and assume that for some $n>0$, $nx$ lies in the
image of $H^*_{Bb}(G,M)\to H^*_B(G,M)$. Then so does $x$.
\end{lemma}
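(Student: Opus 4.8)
The plan is to follow the scheme of Lemma \ref{R-reduction}, replacing the coefficient sequence used there by one adapted to division by $n$, and to exploit that the forgetful (comparison) map $c\colon H^*_{Bb}(G,-)\to H^*_B(G,-)$ is an isomorphism on any coefficient module of finite diameter (into such a module every Borel cochain is automatically bounded).

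First I would dispose of torsion. Let $TM<M$ be the torsion subgroup; it is a finite $G$-submodule, and $M/TM$ is finitely generated, torsion-free and discrete, hence again an object of $mP(G)$. We have a $\Z$-split short exact sequence $TM\to M\to M/TM$ in $mP(G)$; since $M/TM$ is discrete and finitely generated, every bounded Borel cochain into $M/TM$ has finite range and so lifts to a bounded Borel cochain into $M$. Thus this sequence induces short exact sequences of cochain complexes both for $C^*_B$ and for $C^*_{Bb}$, whence a ladder of long exact sequences relating $H^*_{Bb}(G,-)$ and $H^*_B(G,-)$ in which the vertical comparison maps on the $TM$-terms are isomorphisms. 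The image of $nx$ in $H^*_B(G,M/TM)$ again lies in the image of $c$, so it suffices to prove the statement for $M/TM$; a routine diagram chase then carries the conclusion back to $M$.

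So assume $M\cong\Z^k$ as an abelian group. Multiplication by $n$ is then injective and sends bounded sets to bounded sets for the word metric, so it yields a short exact sequence $M\stackrel{n}{\longrightarrow}M\to M/nM$ in $mP(G)$ with $M/nM$ finite; as above it induces long exact sequences in $H^*_{Bb}(G,-)$ and in $H^*_B(G,-)$, compatible through $c$, in which $c$ is an isomorphism on the $M/nM$-terms. Now I would run the following chase, in a fixed degree $q$. Choose $y\in H^q_{Bb}(G,M)$ with $c(y)=nx$. The image of $y$ in $H^q_{Bb}(G,M/nM)$ corresponds under $c$ to the image of $nx$ in $H^q_B(G,M/nM)$, which vanishes by exactness of the bottom row; hence, by exactness of the top row, $y=ny'$ for some $y'\in H^q_{Bb}(G,M)$. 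Then $n\bigl(x-c(y')\bigr)=0$ in $H^q_B(G,M)$, so by exactness $x-c(y')=\partial(\bar w)$ for some $\bar w\in H^{q-1}_B(G,M/nM)$. Pulling $\bar w$ back along the isomorphism $c$ to $w\in H^{q-1}_{Bb}(G,M/nM)$ and using naturality of the connecting homomorphisms gives $c(\partial_b w)=\partial(\bar w)=x-c(y')$; therefore $x=c(y'+\partial_b w)$ lies in the image of $c$, as required.

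The argument is routine once the apparatus is set up. The one step that genuinely needs care is the justification that the two coefficient sequences are sequences in $mP(G)$ whose quotients are discrete and finitely generated, so that bounded cochains into them have finite range and lift — making the long exact sequence in Borel bounded cohomology available, exactly the mechanism already used in Lemma \ref{R-reduction} — together with keeping track of which modules have finite diameter, so that $c$ is an isomorphism on them. Everything else is diagram chasing of the same flavour as in Lemma \ref{R-reduction}.
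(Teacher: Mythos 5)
Your argument is correct, but it takes a genuinely different route from the paper. The paper's proof is a four-line reduction: it pushes $x$ forward to $\overline{x}\in H^*_B(G,M\otimes\R)$, observes that the image of the comparison map $H^*_{Bb}(G,M\otimes\R)\to H^*_B(G,M\otimes\R)$ is an $\R$-vector subspace (so $n\overline{x}$ in the image forces $\overline{x}$ in the image), and then invokes Lemma~\ref{R-reduction} to descend from $M\otimes\R$ back to $M$. You instead work integrally with the Bockstein-type coefficient sequence $M\xrightarrow{\ n\ }M\to M/nM$ (after first splitting off the torsion via $TM\to M\to M/TM$, exactly the device the paper uses inside the proof of Lemma~\ref{R-reduction}), exploiting that the comparison map is an isomorphism on the finite module $M/nM$ and chasing the two long exact sequences; the chase itself is sound. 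What each approach buys: the paper's proof is shorter because Lemma~\ref{R-reduction} has already done all the homological work, including the torsion reduction and the appeal to Warner for a Borel bounded section of $A\otimes\R\to A\otimes S^1$; yours is self-contained, never leaves integral coefficients, and replaces the section of $\R\to S^1$ by the trivial observation that any set-theoretic section of $M\to M/nM$ is Borel and bounded --- at the cost of re-establishing the long exact sequence apparatus for the mod-$n$ sequence, which you correctly identify as the one step needing care (injectivity of multiplication by $n$ is exactly why you must kill torsion first). One small remark: virtual connectedness of $G$ is still used implicitly in your argument, as in the paper's, to guarantee that $M$, $M/TM$ and $M/nM$ carry $G$-invariant word metrics and so are genuinely objects of $mP(G)$.
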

\begin{proof} The connected component $G_0$ acts trivially
on $M$ and has finite index in $G$. From our assumption
we infer that the image $\overline{x}$ of $x$
in $H^*_B(G,M\otimes\R)$ has the property that $n\overline{x}$
lies in the image of the map $H^*_{Bb}(G,M\otimes\R)\to
H^*_B(G,H\otimes\R)$. Since that image is an $\R$-vector space,
we conclude that $\overline{x}$ lies in that image too. Using Lemma
\ref{R-reduction}, it follows that already $x$ lies in the
image of $H^*_{Bb}(G,M)\to H^*_B(G,M)$.
\end{proof}
The following result was proved in \cite{CMPS}
for connected
Lie groups and trivial coefficients $A=\Z$.
However, in applications one is often confronted
with Lie groups
with finitely many connected components (virtually connected
Lie groups), like $GL(n,\R)$ or 
$O(p,q)$. The transfer map allows a natural
generalization of \cite[Theorem 1.2]{CMPS}
to include such groups.
\begin{cor}
Let $G$ be a virtually connected Lie group. Then
the following are equivalent:
\begin{enumerate}
\item the radical $\sqrt{G}$ is linear;
\item for every finitely generated $G/G_0$-module $M$,
the forgetful map $H^*_{Bb}(G,M)\to H^*_B(G,M)$ is surjective.
\end{enumerate}
\end{cor}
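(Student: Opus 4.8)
The plan is to prove the two implications separately, using the machinery assembled in the previous sections. The implication $(2)\Rightarrow(1)$ is already essentially done: it is precisely the content of Corollary \ref{3implies1}, so I would simply invoke that. Thus the real work is $(1)\Rightarrow(2)$, and here the idea is to reduce the statement about a general virtually connected $G$ with a general finitely generated $G/G_0$-module $M$ to the known case of the connected group $G_0$ with trivial integer coefficients, established in \cite[Theorem 1.2]{CMPS}.

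First I would note that since $G_0$ acts trivially on $M$ and $G_0\triangleleft G$ has finite index, \cite[Theorem 1.2]{CMPS} gives that the forgetful map $H^*_{Bb}(G_0,\Z)\to H^*_B(G_0,\Z)$ is surjective; I would want to upgrade this to surjectivity of $H^*_{Bb}(G_0,M)\to H^*_B(G_0,M)$ for any finitely generated $M$ with trivial $G_0$-action. This is a standard argument: a finitely generated abelian group is a finite direct sum of copies of $\Z$ and finite cyclic groups, Borel and Borel bounded cohomology commute with finite direct sums, and for the finite cyclic summands $\Z/k$ one uses the short exact sequence $\Z\to\Z\to\Z/k$ in $mP(G_0)$ together with a diagram chase (or directly the fact that $\Z/k$ has finite diameter, so the forgetful map is an isomorphism there). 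So surjectivity holds for $G_0$ with coefficients in any finitely generated $M$ carrying the trivial action.

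Next I would run the transfer argument. Given a class $x\in H^*_B(G,M)$, restrict it to $H^*_B(G_0,M)$; by the previous paragraph $\mathrm{Res}(x)$ lifts to a class $\tilde y\in H^*_{Bb}(G_0,M)$, and pushing $\tilde y$ forward by $\TR$ and comparing via the commutative diagram of the Theorem, we get that $\TR(\mathrm{Res}(x))=[G:G_0]\cdot x$ lies in the image of the forgetful map $H^*_{Bb}(G,M)\to H^*_B(G,M)$. Setting $n=[G:G_0]>0$, we have shown $nx$ is in the image of the forgetful map. Now apply Lemma \ref{N-reduction}: since $G$ is virtually connected and $M\in mP(G)$ is finitely generated as an abelian group with discrete topology, it follows that $x$ itself lies in the image of $H^*_{Bb}(G,M)\to H^*_B(G,M)$. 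Since $x$ was arbitrary, the forgetful map is surjective, which is $(2)$.

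The main obstacle is making sure the reduction from trivial $\Z$-coefficients to arbitrary finitely generated trivial coefficients over $G_0$ is legitimately covered by \cite[Theorem 1.2]{CMPS} together with elementary homological algebra — one must check that the short exact sequences of coefficient modules used are genuinely exact sequences in $mP(G_0)$ (i.e., the maps respect bounded sets and the relevant sections exist), which is exactly the kind of verification already carried out in the proof of Lemma \ref{R-reduction}. Everything else is a bookkeeping exercise chaining together the Eckmann--Shapiro Lemma \ref{CI}, the transfer Theorem, and the divisibility reduction Lemma \ref{N-reduction}.
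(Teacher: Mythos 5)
Your proposal is correct and follows the same skeleton as the paper's proof: $(2)\Rightarrow(1)$ is dispatched by Corollary~\ref{3implies1}, and $(1)\Rightarrow(2)$ runs through the commutative transfer diagram, the identity $\TR\circ\mathrm{Res}=[G:G_0]\cdot\mathrm{id}$, and Lemma~\ref{N-reduction}. The one place you diverge is the input for surjectivity of the forgetful map over $G_0$: the paper simply quotes \cite[Lemma 3.5]{CMPS}, which already covers finitely generated coefficient modules with trivial $G_0$-action, whereas you start from \cite[Theorem 1.2]{CMPS} (the case $A=\Z$) and upgrade by decomposing $M$ as a finite direct sum of copies of $\Z$ and finite cyclic groups, using that Borel and Borel-bounded cohomology commute with finite direct sums and that the forgetful map is an isomorphism for finite (hence bounded-diameter) coefficients. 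That upgrade is legitimate --- only the $G_0$-module structure of $M$ matters there, which is trivial, and $M$ is discrete so the splitting causes no topological or metric difficulties (any two word metrics on a finitely generated abelian group are $b$-equivalent) --- so your route is a self-contained substitute for the citation, at the cost of a small amount of extra homological bookkeeping that the paper avoids by invoking the stronger lemma directly.
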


\begin{proof}That (2)$\Rightarrow$(1) is the content of Corollary~\ref{3implies1}, so we show that (1)$\Rightarrow$(2). For this
we consider the commutative diagram
$$\xymatrix{
H^*_{Bb}(G,M)\ar[r]^{\rm{Res}}\ar[d]^\varepsilon&
H^*_{Bb}(G_0,M)\ar[r]^{\TR}\ar[d]^\zeta&
 H^*_{Bb}(G,M)\ar[d]^\eta\\
  H^*_B(G,M))\ar[r]^{\rm{Res}}& H^*_B(G_0,M)\ar[r]^{\TR}&
 H^*_B(G,M)\,.
}$$
Since $G_0$ is connected, the $G_0$-action on $M$ is
trivial. Because $\sqrt{G}=\sqrt{G_0}$ we conclude
from \cite[Lemma 3.5]{CMPS} that $\zeta$ is surjective.
It follows that for $x\in H^*_B(G,M)$, the element $\TR\circ\rm{Res}(x)=[G:G_0]\cdot x$ lies in
the image of $\eta$. By Lemma \ref{N-reduction}, we
conclude that $x$ lies in the image of $\eta$ too,
completing the proof.
\end{proof}
  
%%%%%%%%%%%%%%%%%%%%%%%%%%%%%%%%%%%%%%%%%%%%%%%%%%%

\end{document}